\newtheorem{theorem}{Theorem}[section]
\newtheorem{lemma}[theorem]{Lemma}
\newtheorem{conjecture}[theorem]{Conjecture}
\theoremstyle{definition}
\newtheorem{remark}[theorem]{Remark}
\numberwithin{equation}{section}
\def\pmod #1{\ ({\rm{mod}}\ #1)}
\def\Z{\Bbb Z}
\def\N{\Bbb N}
\def\bg{\bigg}
\def\({\bg(}
\def\){\bg)}
\def\t{\text}
\def\gen{{\rm gen}}
\def\q{{\rm q}}
\def\ve{\varepsilon}
\theoremstyle{plain}
\begin{document}

\baselineskip=17pt
\hbox{}
\medskip

\title[Sums of four squares with a certain restriction]{Sums of four squares with a certain restriction}

\author[Y.-F. She and H.-L. Wu]{Yue-Feng She and Hai-Liang Wu }

\address {(Yue-Feng She) Department of Mathematics, Nanjing
University, Nanjing 210093, People's Republic of China}
\email{{\tt she.math@smail.nju.edu.cn}}

\address {(Hai-Liang Wu) School of Science, Nanjing University of Posts and Telecommunications, Nanjing 210023, People's Republic of China}
\email{\tt whl.math@smail.nju.edu.cn}

\date{}

\begin{abstract}
In 2016, while studying restricted sums of integral squares, Sun posed the following conjecture:
Every positive integer $n$ can be written as $x^2+y^2+z^2+w^2$ $(x,y,z,w\in\N=\{0,1,\cdots\})$ with $x+3y$ a square. Meanwhile, he also conjectured that for each positive integer $n$ there exist integers $x,y,z,w$ such that $n=x^2+y^2+z^2+w^2$ and $x+3y\in\{4^k:k\in\N\}$.
In this paper, we confirm these conjectures via some arithmetic theory of ternary quadratic forms.
\end{abstract}

\subjclass[2020]{Primary 11E25; Secondary 11E12, 11E20.}

\keywords{sums of four squares, ternary quadratic forms.}

\maketitle
\section{Introduction}
The Lagrange four-square theorem states that every positive integer can be written as the sum of four integral squares. Along this line, in 1917, Ramanujan \cite{R} claimed that there are
at most $55$ positive definite integral diagonal quaternary quadratic forms that can represent all positive integers. Later in the paper \cite{D27} Dickson confirmed that Ramanujan's claim is true for $54$ forms in Ramanujan's list and pointed out that the quaternary form $x^2+2y^2+5z^2+5w^2$ included in his list represents all positive integers except $15$.

In 2016, Sun \cite{S17a}
studied some refinements of Lagrange's theorem. For instance, he showed that
for any $k\in\{4,5,6\}$, each positive integer $n$ can be written as $x^k+y^2+z^2+w^2$ with $x,y,z,w\in\N$. In addition, let $P(X,Y,Z,W)$ be an integral polynomial. Sun called $P(X,Y,Z,W)$ a {\it suitable polynomial} if every positive integer $n$ can be written as $x^2+y^2+z^2+w^2$ $(x,y,z,w\in\N)$ with $P(x,y,z,w)$ a square. In the same paper, Sun showed that the polynomials $X,\ 2X,\ X-Y,\ 2X-2Y$ are all suitable.
Also, he showed that every positive integer $n$ can be written as $x^2+y^2+z^2+w^2$ $(x,y,z,w\in\Z)$ with $x+2y$ a square and he conjectured that $X+2Y$ is a suitable polynomial. This conjecture was later confirmed by Sun and his cooperator Y.-C. Sun in \cite{SS}. Readers may consult \cite{KS,SS,S19,WS} for the recent progress on this topic. Moreover, Sun \cite{S17a} investigated the polynomial $X+3Y$, and he \cite[Theorem 1.3(ii)]{S17a} obtained the following result:
\begin{theorem}[Sun]
Assuming the GRH {\rm (}Generalized Riemann Hypothesis{\rm )}, every positive integer $n$ can be written as $x^2+y^2+z^2+w^2$ $(x,y,z,w\in\Z)$ with $x+3y$ a square.
\end{theorem}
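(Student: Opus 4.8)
The plan is to convert the condition ``$x+3y$ is a perfect square'' into a representation problem for a single ternary quadratic form. Since the vector $(1,3)$ has norm $10$, I first record the key algebraic identity: if $x+3y=t^2$ and we set $u=10y-3t^2$ (so that automatically $u\eq -3t^2\pmod{10}$), then
\[
10(x^2+y^2)=t^4+(10y-3t^2)^2=t^4+u^2 .
\]
Consequently $n=x^2+y^2+z^2+w^2$ with $x+3y=t^2$ is equivalent to
\[
10n=t^4+u^2+10z^2+10w^2 .
\]
Thus the whole theorem reduces to the following assertion: for every $n\in\zp$ there is an integer $t\gs 0$ such that $10n-t^4$ is represented by the ternary form $g(u,z,w)=u^2+10z^2+10w^2$. (For the second conjecture one must in addition take $t=2^k$, so that $x+3y=t^2=4^k$.)

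Next I would dispose of the integrality side-condition, which turns out to be free. In any representation $10n-t^4=u^2+10z^2+10w^2$ one has $u^2\eq -t^4\eq(3t^2)^2\pmod{10}$, whence $u\eq\pm 3t^2\pmod{10}$; replacing $u$ by $-u$ if necessary we may assume $u\eq -3t^2\pmod{10}$, and then $y=(u+3t^2)/10$ and $x=t^2-3y$ are integers with $x+3y=t^2$. So producing \emph{any} integral representation of $10n-t^4$ by $g$ already yields the desired four squares.

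The core of the argument is therefore the arithmetic of $g=\langle 1,10,10\rangle$, which I would treat by genus theory. Since $\det g=100$, the form is unimodular, hence universal, over $\Z_p$ for $p\neq 2,5$, so only the primes $2$ and $5$ impose conditions. At $5$ one checks that choosing $5\nmid t$ forces $10n-t^4\eq -1\eq 4\pmod 5$, a nonzero square, which is exactly the $5$-adic representability condition for $g$. At $2$ one has $g\cong\langle 1,2,2\rangle$ over $\Z_2$, whose represented $2$-adic integers are those not of the shape $4^a(16b+14)$; this can be arranged by prescribing the $2$-adic behaviour of $t$ in terms of $n\bmod 16$. Hence, with $t$ chosen in suitable residue classes modulo $2$ and $5$, the number $10n-t^4$ lies in the set represented by the \emph{genus} of $g$.

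The main obstacle is that $g$ does not have class number one: the integer $5$ is represented everywhere locally but not by $g$ itself (it is represented by another class in the genus of $g$), so local representability does not by itself give a representation by $g$. I would resolve this by analysing the spinor genera of $g$ and the finitely many spinor-exceptional square classes attached to them, and then exploiting the abundance of admissible $t$: for the first conjecture there are $\asymp n^{1/4}$ choices of $t$, and I expect to be able to pick one so that $10n-t^4$ avoids every exceptional class and lies in a square class for which representation by the spinor genus coincides with representation by $g$, thereby eliminating the ineffective ``sufficiently large'' threshold that forced Sun to assume GRH. The genuinely hard case is the second conjecture, where $t=2^k$ leaves only $O(\log n)$ choices; here I anticipate having to split $n$ into residue classes modulo a fixed power of $2$ (and of $5$) and, in each class, exhibit an explicit $k$ together with an exact, exception-free representation criterion for the relevant square class of $g$.
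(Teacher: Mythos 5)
Your opening reduction is exactly the paper's: the identity $10\bigl((t^2-3y)^2+y^2\bigr)=t^4+(10y-3t^2)^2$ converts the problem into representing $10n-t^4$ by $f=x^2+10y^2+10z^2$, and the sign trick $u\equiv-3t^2\pmod{10}$, which recovers integral $y$ and $x$ from any representation, is also the paper's. The local analysis ($5\nmid t$, congruence conditions on $t$ according to $n$ modulo powers of $2$) matches as well. The genuine gap is in the only genuinely hard step: passing from representation by $\gen(f)$ to representation by $f$ itself. Your plan --- spinor genera plus spinor-exceptional square classes --- cannot work for this form, because the genus of $f$ consists of a \emph{single} spinor genus containing both classes. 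One can see this through the very example you raise: $5$ is represented by the genus but not by $f$. If the two classes formed two distinct spinor genera, $5$ would be a spinor-exceptional integer; but by the Kneser--Schulze-Pillot theory a primitive spinor exception $c$ must satisfy $\theta(O^{+}(L_p))\subseteq N_{E_p/\Q_p}(E_p^{\times})$ for every prime $p$, where $E=\Q(\sqrt{-c\cdot\det})$, and this fails at $p=5$: the Jordan splitting $\langle1\rangle\perp5\langle2,2\rangle$ yields integral symmetries of spinor norms $1,10,20$, whose pairwise products already generate all of $\Q_5^{\times}/\Q_5^{\times2}$, which cannot lie in the index-$2$ norm group of the ramified extension $\Q_5(\sqrt{-5})$. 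Hence $5$ is not spinor-exceptional, so both classes lie in one spinor genus. Consequently ``represented by $\spn(f)$'' coincides with ``represented by $\gen(f)$'' and gives you nothing: there are no exceptional square classes for your $\asymp n^{1/4}$ values of $t$ to avoid. You are then thrown back on the Duke--Schulze-Pillot theorem (spinor genus representation implies representation for all sufficiently large integers), whose threshold is ineffective because of possible Siegel zeros; the abundance of admissible $t$ does not cure this, since every candidate $10n-t^4$ is subject to the same unknown threshold, so small $n$ can never be certified by computation. That ineffectivity is precisely why Sun needed GRH, and your outline does not remove it.

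What the paper does at this step is entirely different and elementary. The genus of $f$ has exactly two classes, $f$ and $g(X,Y,Z)=4X^2+5Y^2+6Z^2+4ZX$, and the paper exploits the explicit identities
$$g(X,Y,Z)=g(X+Z,\,Y,\,-Z),\qquad f(X,Y,Z)=g\Bigl(\frac{X-Y-Z}{2},\,-Y+Z,\,Y+Z\Bigr),$$
which permit transforming a representation of $10n-t^4$ by $g$ into one by $f$, once the congruence conditions imposed on $t$ and $n$ force suitable parities on the $g$-representation. This class-switching argument uses no analytic input and has no threshold at all, which is how the paper obtains the unconditional theorem --- indeed the stronger statement with $x,y,z,w\in\N$ --- something your route cannot reach even in principle. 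A minor further correction: over $\Z_2$ the form $\langle1,2,2\rangle$ fails to represent exactly the integers $4^a(8b+7)$, not $4^a(16b+14)$.
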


Based on calculations, Sun \cite[Conjecture 4.1]{S17a} posed the following conjecture.
\begin{conjecture}
$X+3Y$ is a suitable polynomial, i.e., each positive integer $n$ can be written as $x^2+y^2+z^2+w^2$ $(x,y,z,w\in\N)$ with $x+3y$ a square.
\end{conjecture}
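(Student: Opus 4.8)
The plan is to trade the four-square condition together with the linear restriction for a single representation problem in the theory of ternary quadratic forms. The key is the rotation identity $(x+3y)^2+(3x-y)^2=10(x^2+y^2)$. Imposing the restriction $x+3y=t^2$ with $t\in\N$ and setting $s:=3x-y$, the equation $n=x^2+y^2+z^2+w^2$ becomes
\[
 s^2+10z^2+10w^2=10n-t^4 ,
\]
with $x=(t^2+3s)/10$ and $y=(3t^2-s)/10$. Thus the Conjecture is equivalent to the following: for every $n$ there exist $t\in\N$ and a representation of $M:=10n-t^4$ by the ternary form $g(s,z,w)=s^2+10z^2+10w^2$ such that (i) $s\equiv 3t^2\pmod{10}$ (so that $x,y\in\Z$) and (ii) $-t^2/3\le s\le 3t^2$ (so that $x,y\ge0$). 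First I would record that these two side conditions are soft. For (i), note that $(\pm s,z,w)$ both represent $M$ and that, since $3t^2$ is a square root of $M$ modulo $10$, the only admissible first coordinates modulo $10$ are $\pm3t^2$; hence exactly one choice of sign of $s$ meets the congruence. For (ii), I would confine $t$ to the window $9n\le t^4\le 10n$; then $M\le n$, so $|s|\le\sqrt M\le\sqrt n\le t^2/3$, and (ii) holds for either sign. This window is an interval of length $\asymp n^{1/4}$, so for all large $n$ it contains many integers $t$, while the finitely many small $n$ can be checked directly.

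What remains is the arithmetic heart of the matter: to show that for each large $n$ at least one $t$ in the window makes $M=10n-t^4$ genuinely represented by $g=\langle1,10,10\rangle$. Here I would bring in the theory of ternary forms. A direct local computation shows that the congruence obstructions are mild — $g$ omits only the residues $\equiv7\pmod8$ and $\equiv2,3\pmod5$ — and that by prescribing $t$ modulo $10$ one can steer $M=10n-t^4$ into the classes that are everywhere locally represented. The real difficulty is to pass from representability by the \emph{genus} of $g$ to representability by the single form $g$ itself. If the genus of $g$ has class number one this step is immediate; in general there is a thin set of spinor exceptions — integers represented by the genus of $g$ but not by $g$ itself, confined to finitely many square-class progressions — that must be avoided. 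I expect this to be the main obstacle. To overcome it I would exploit the fact that the values $M=10n-t^4$ are widely spaced (consecutive ones differ by $\asymp n^{3/4}$) so that they cannot all fall into a fixed thin progression, and combine this with effective results on representations by ternary forms to secure an admissible $t$ with $M$ represented by $g$.

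Once such a $t$ and a representation $M=s^2+10z^2+10w^2$ are in hand, choosing the sign of $s$ as in (i) and unwinding the substitution yields $x=(t^2+3s)/10$ and $y=(3t^2-s)/10$ in $\N$ with $x+3y=t^2$ a perfect square and $x^2+y^2+z^2+w^2=n$, which proves the Conjecture for all large $n$; a finite computer check settles the rest. In short, the reduction to $g$ and the handling of the congruence and sign conditions should be routine, and essentially all of the work lies in the arithmetic of the specific ternary form $\langle1,10,10\rangle$ — determining its genus, its spinor genera, and exactly which integers (not merely which genus-eligible integers) it represents.
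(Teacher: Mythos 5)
Your opening reduction is exactly the one the paper uses: the substitution $s=3x-y$, the equivalence between $n=x^2+y^2+z^2+w^2$, $x+3y=t^2$ and $10n-t^4=s^2+10z^2+10w^2$ with $s\equiv 3t^2\pmod{10}$, and the window $9n\le t^4\le 10n$ forcing $x,y\ge 0$, all appear verbatim (with $t=m$) in the proof of Theorem \ref{Theorem x+3y}. The genuine gap is in the step you yourself call the arithmetic heart. Your plan there is (a) to avoid spinor exceptions by using that the values $10n-t^4$ are widely spaced, and (b) to invoke ``effective results on representations by ternary forms'' to pass from the genus of $x^2+10y^2+10z^2$ to the form itself. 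Neither half can be carried out. For (a), spinor theory gives nothing for this genus: it has two classes, the second represented by $4x^2+5y^2+6z^2+4zx$, and the integers $5$, $6$, $34$ are represented by the genus but not by $x^2+10y^2+10z^2$; these lie in three distinct square classes, whereas spinor exceptional integers for a genus with at most two spinor genera are confined to square classes attached to a single quadratic field. So the exceptional set here is not a union of ``thin square-class progressions'' that one could dodge; both classes lie in one spinor genus and the obstruction is sporadic. For (b), the passage from representation by the genus (equivalently, by the spinor genus) to representation by the individual ternary form is the Duke--Schulze-Pillot theorem, which is \emph{ineffective}; effective versions are known only under GRH. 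This is precisely the obstruction that makes Sun's result quoted as Theorem 1.1 conditional on GRH, and it is fatal to your closing step: without an explicit threshold, the ``finite computer check'' for small $n$ can never be performed. Invoking effective ternary results at this point assumes exactly what has to be proved.

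What the paper supplies in place of this — and what is missing from your proposal — is an unconditional algebraic substitute (Lemma \ref{Lemma n=1,2 mod4 representation of 1,10,10}): having computed that the only other class in the genus is $4x^2+5y^2+6z^2+4zx$, it chooses $t$ in residue classes adapted to $n$ modulo $4$ (odd and prime to $5$, or divisible by $4$, etc.) so that any representation of $10n-t^4$ by that second form has forced parities, and then explicit rational isometries (the paper's Eqs.\ (\ref{Eq. automorph of g(x,y,z)}) and (\ref{Eq. isometry from g to f})) convert such a representation into one by $x^2+10y^2+10z^2$. A second omission: when $4\mid n$ no odd $t$ can work at all, since then $10n-t^4\equiv 7\pmod 8$ is not even locally represented at $2$, and ``prescribing $t$ modulo $10$'' cannot repair a $2$-adic failure. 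The paper handles $4\mid n$ by a separate descent: induction reduces to $16\nmid n$, and Lemmas \ref{Lemma a stronger result on x+2y}--\ref{Lemma x+3y=2 square} produce representations of $n/4$ with $x+3y=2m^2$ (via a strengthened $x+2y$ theorem with the extra constraint $x\le y$ and the identity $2(a^2+b^2)=(a-b)^2+(a+b)^2$), which after doubling become representations with $x+3y=(2m)^2$. Your proposal would need this machinery, or a genuine replacement for it, before it could count as a proof.
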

\begin{remark}
With the help of computer, Sun \cite{Sun oeis} verified this conjecture for $n$ up to $10^{8}$. Later the authors verified this conjecture for $n$ up to $4\times10^9$.
For example,
\begin{align*}
9996&=58^2+14^2+6^2+80^2\ \text{and}\ 58+3\times14=10^2,\\
99999999&=139^2+19^2+6866^2+7269^2\ \text{and}\ 139+3\times19=14^2,\\
3999999999&=2347^2+18^2+12671^2+15295^2\ \text{and}\ 2347+3\times18=49^2.
\end{align*}
\end{remark}
In this paper, we confirm this conjecture via some arithmetic theory of ternary quadratic forms.
\begin{theorem}\label{Theorem x+3y}
Every positive integer $n$ can be written as $x^2+y^2+z^2+w^2$ $(x,y,z,w\in\N)$ with $x+3y$ a square.
\end{theorem}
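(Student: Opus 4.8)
The plan is to trade the quadratic side condition for a parameter and thereby reduce the whole problem to representing a single integer by one ternary quadratic form. First I would impose $x+3y=t^2$ with $t\in\N$ and exploit the identity
\[
(x+3y)^2+(3x-y)^2=10(x^2+y^2).
\]
Writing $u=3x-y$ and multiplying $n=x^2+y^2+z^2+w^2$ by $10$, the condition $x+3y=t^2$ converts the desired representation into the single equation
\[
u^2+10z^2+10w^2=10n-t^4,\qquad u\eq 3t^2\pmod{10},
\]
the congruence being exactly what makes $x=(t^2+3u)/10$ and $y=(3t^2-u)/10$ integers. Thus it suffices to represent $M:=10n-t^4$ by the ternary form $f(u,z,w)=u^2+10z^2+10w^2$ for some admissible $t$, with $u$ in the stated residue class.

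Next I would dispose of the two elementary side constraints. Since $z,w$ occur only through their squares their signs are free; and because $u^2\eq M\eq -t^4\pmod{10}$ while $\pm3t^2$ are precisely the square roots of $-t^4$ modulo $10$, replacing $u$ by $-u$ if necessary forces $u\eq 3t^2\pmod{10}$. For nonnegativity I would use only values of $t$ in the window $9n\ls t^4\ls 10n$; then every solution automatically satisfies $|u|\ls\sqrt{M}=\sqrt{10n-t^4}\ls t^2/3$, whence $x=(t^2+3u)/10\gs0$ and $y=(3t^2-u)/10\gs0$. For all large $n$ this window contains many integers $t$, and as $t$ runs through it the target $M$ sweeps an interval of length $\asymp n$, so one may additionally insist that $M$ be large.

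The heart of the proof is then the ternary representability of $M$ by $f=\langle1,10,10\rangle$ (determinant $100$). I would begin with the local conditions: away from $2$ and $5$ the form is unimodular, hence universal; at $5$ one checks that $M\eq-t^4\pmod5$ always lands in $\{0,\pm1\}$, so that condition holds for every $t$; and the only genuine local constraint, at $2$, I would meet by fixing the parity (and the residue modulo a small power of $2$) of $t$ inside the window. The decisive step is to pass from the genus to the form itself: I would compute the class number of the genus of $f$ and, where it exceeds one, analyze the spinor genus to pin down the finitely many square classes of spinor exceptions, after which Duke--Schulze-Pillot type results guarantee that every sufficiently large eligible $M$ outside those classes is represented by $f$ itself. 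The main obstacle is precisely this spinor-genus analysis, coupled with ensuring that an eligible, non-exceptional, large value of $M$ is reachable from some admissible $t$ in the narrow window; the finitely many small $n$ below the resulting effective threshold are then settled by the computation already recorded for $n\ls4\times10^{9}$.
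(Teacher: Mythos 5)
Your reduction to a ternary problem is exactly the one the paper uses: the identity $(x+3y)^2+(3x-y)^2=10(x^2+y^2)$, the window $9n\ls t^4\ls 10n$, and the sign change forcing $u\eq 3t^2\pmod{10}$ all appear in the proof of Theorem~\ref{Theorem x+3y} and in Lemma~\ref{Lemma n=1,2 mod4 representation of 1,10,10}. The proposal breaks down at the decisive step, the passage from the genus of $f=x^2+10y^2+10z^2$ to the class of $f$. The Duke--Schulze-Pillot theorem you invoke (every sufficiently large integer primitively represented by the spinor genus of a positive ternary form is represented by the form itself) is \emph{ineffective}: its threshold comes from playing a lower bound for the Eisenstein part of the theta series against Duke's bound for the cuspidal part, and that lower bound rests on Siegel-type class number estimates for which no numerical constant can be extracted unconditionally. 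Hence your closing step --- ``the finitely many small $n$ below the resulting effective threshold are settled by the computation for $n\ls 4\times10^9$'' --- does not exist: there is no effective threshold at all, let alone one below $4\times10^9$. This is precisely why Sun's earlier theorem on $x+3y$ quoted in the introduction had to assume GRH, and it is the entire point of Lemma~\ref{Lemma n=1,2 mod4 representation of 1,10,10}: the genus of $f$ has exactly two classes, $f$ and $g=4x^2+5y^2+6z^2+4zx$, and the paper converts a representation by $g$ into one by $f$ by completely elementary means (the explicit isometry $f(X,Y,Z)=g\l(\frac{X-Y-Z}{2},-Y+Z,Y+Z\r)$, an automorph of $g$, and a parity analysis), valid in the particular residue classes it arranges for $M=10n-t^4$. (A purely algebraic repair of your plan is conceivable --- the two classes here lie in different spinor genera, so the effective theory of spinor exceptional integers could replace the analytic input --- but identifying those exceptional square classes and checking that your targets avoid them is exactly the work your sketch defers, and it is not a ``Duke--Schulze-Pillot type'' argument.)

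There is a second, independent gap: your claim that the $2$-adic condition can be met ``by fixing the parity of $t$'' fails for $n\eq0\pmod4$. If $t$ is odd then $M=10n-t^4\eq7\pmod 8$, and $f$ represents no such number even over $\Z_2$ (its odd values are $\eq1,3,5\pmod 8$); if $t$ is even, the target $M$ is divisible by $8$ and both the local analysis and the class-versus-genus analysis change. The paper does not push this case through the same machinery: it handles $n\eq0\pmod 4$ by induction (reducing $16\mid n$ to $n/16$) combined with Lemma~\ref{Lemma x+3y=2 square}, which produces representations with $x+3y$ equal to \emph{twice} a square for $n'\not\eq0\pmod4$ and which in turn requires the strengthened $x+2y$ result of Lemma~\ref{Lemma a stronger result on x+2y}. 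Note also that even for $4\nmid n$ the congruence conditions on $t$ cannot be prescribed uniformly: the paper takes $t$ odd when $n\eq1,2\pmod4$ but $4\mid t$ when $n\eq3\pmod4$, precisely so that the genus-to-class transfer goes through. All of this structure would have to be supplied before your outline becomes a proof.
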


If we omit the condition that $x,y,z,w\in\N$, then Sun \cite[Theorem 1.2(iii)]{S19} proved that every positive integer $n$ can be written as $x^2+y^2+z^2+w^2$ $(x,y,z,w\in\Z)$ with $x+3y\in\{4^k: k\in\N\}$ provided that any positive integer $m\equiv9\pmod{20}$ can be written as $5x^2+5y^2+z^2$ with $x,y,z\in\Z$ and $2\nmid z$. Motivated by this, Sun also posed the following conjecture.
\begin{conjecture}\label{Conjecture power of 4}
For every positive integer $n$, there exist $x,y,z,w\in\Z$ such that $n=x^2+y^2+z^2+w^2$ and $x+3y\in\{4^k: k=0,1,\cdots\}$.
\end{conjecture}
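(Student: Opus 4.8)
The plan is to deduce Conjecture \ref{Conjecture power of 4} from a single clean representation theorem for a ternary form. By the conditional result of Sun recalled above (\cite[Theorem 1.2(iii)]{S19}), it suffices to prove that every positive integer $m\equiv 9\pmod{20}$ can be written as $5x^2+5y^2+z^2$ with $x,y,z\in\Z$ and $2\nmid z$. My first step is to remove the parity constraint by folding it into the form. If $m=5x^2+5y^2+z^2$ with $z$ odd and $m\equiv 9\pmod{20}$, then reducing modulo $20$ forces $z^2\equiv 9$ and $5(x^2+y^2)\equiv 0\pmod{20}$, whence $x$ and $y$ are both even; conversely, any representation $m=a^2+20b^2+20c^2$ gives $5(2b)^2+5(2c)^2+a^2=m$ with $a$ necessarily odd, since $a^2\equiv m\equiv 9\pmod{20}$. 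Thus the target is \emph{equivalent} to the statement that the ternary form $f_0(a,b,c)=a^2+20b^2+20c^2$ represents every $m\equiv 9\pmod{20}$, and it is this statement that I would prove.

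Next I would check that each such $m$ is represented by the genus of $f_0$, i.e.\ by $f_0$ over $\R$ and over every $\Z_p$. This is immediate at the archimedean place and at odd primes $p\neq 5$, where $f_0$ is unimodular of rank three, hence isotropic and universal. At $p=5$ one uses $f_0\cong\langle 1\rangle\perp 5\langle 4,4\rangle$ together with the fact that $m\equiv 4\equiv 2^2\pmod 5$ is a square unit; at $p=2$ one uses $f_0\cong\langle 1\rangle\perp 4\langle 5,5\rangle$ together with $m\equiv 1$ or $5\pmod{8}$. All of these are short Hensel-type verifications, so every $m\equiv 9\pmod{20}$ lies in the genus of $f_0$, a form of determinant $400=2^4\cdot 5^2$. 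A brief splitting argument (any class in this genus representing $1$ must be $\cong f_0$) further pins down the genus and will be useful in the bookkeeping below.

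The heart of the matter, and the step that removes the hypothesis on GRH present in Sun's earlier result, is the passage from the genus to the form $f_0$ itself. I would compute the genus of $f_0$ and its decomposition into spinor genera. If the genus consists of a single (spinor) class, the representation theorem is immediate. In general I expect to invoke the theory of spinor genera for ternary forms in the effective form of Schulze-Pillot and Duke--Schulze-Pillot: since every $m\equiv 9\pmod{20}$ is coprime to the determinant $2^4\cdot 5^2$, it is represented by its spinor genus once it is large enough, so the whole question reduces to controlling the spinor-genus structure. The main obstacle is therefore to show that the square class of $9\pmod{20}$ is \emph{not} spinor-exceptional for $f_0$ — equivalently, that $f_0$ itself, and not merely a sibling class in its genus, represents this progression. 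Once this is established, a finite computation disposing of the small values of $m$ completes the proof of Conjecture \ref{Conjecture power of 4}.
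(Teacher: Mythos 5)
Your reduction is sound, and it is a genuinely different route from the paper's: you propose to prove Sun's auxiliary hypothesis (every $m\equiv 9\pmod{20}$ is $5x^2+5y^2+z^2$ with $2\nmid z$) and then quote his conditional theorem, whereas the paper never proves that hypothesis at all --- it deduces Theorem \ref{Theorem power of 4} directly from Lemma \ref{Lemma n=1,2 mod4 representation of 1,10,10} (on $x^2+10y^2+10z^2$) plus a doubling-and-induction trick. Your equivalence with representability by $f_0=x^2+20y^2+20z^2$ and your check that every $m\equiv 9\pmod{20}$ lies in $\q(\gen(f_0))$ are both correct. The fatal problem is the genus-to-form step. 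The genus of $f_0$ really does contain a second class, $g_0=4x^2+5y^2+20z^2$: over $\Z_5$ both forms are isometric to $\langle 1\rangle\perp 5\langle 1,1\rangle$ (note $4\in\Z_5^{\times 2}$), and over $\Z_2$ both are isometric to $\langle 1\rangle\perp\langle 4,4\rangle$ (e.g.\ $g_0(0,3,t)=1$ for $t^2=-11/5$, which is a square unit in $\Z_2$). Moreover $f_0$ is \emph{not regular}: $61=g_0(3,1,1)$ is represented by the genus, but $61\notin\q(f_0)$ (test $61-x^2$ for $x=1,3,5,7$). So ``locally represented everywhere'' does not imply ``represented by $f_0$'', and --- this is the crux --- none of your tools can distinguish the progression $9\pmod{20}$ from $1\pmod{20}$: both consist of square units at $5$ and of integers $\equiv1\pmod 4$ at $2$, and spinor exceptionality depends only on the square class of $m$, never on $m$ modulo $20$. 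Yet the universal statement you need is true for $9\pmod{20}$ (that is Sun's hypothesis) and false for $1\pmod{20}$ (at $m=61$). Any correct proof must exploit the residue $9$ in passing from the genus to $f_0$; the proposal contains no mechanism for this.

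The two tools you name cannot close this gap. First, the spinor plan is misconceived: ``the square class of $9\pmod{20}$'' is not well defined, since these $m$ occupy infinitely many square classes, and the only square class that could be spinor exceptional for this genus is $\Q^{\times2}$ itself (the associated extension is $\Q(\sqrt{-1})$, because the $2$-adic spinor norm group of $\langle 1\rangle\perp4\langle 5,5\rangle$ contains the norm group of $\Q_2(i)$); infinitely many perfect squares are $\equiv 9\pmod{20}$, so ``not spinor exceptional'' is false as stated --- harmless only because $f_0$ represents squares trivially. Worse, since $61$ is \emph{not} spinor exceptional and is missed by $f_0$, the Kneser--Schulze-Pillot theorem (a non-exceptional integer represented by the genus is represented by every spinor genus) forces $\spn(f_0)$ to contain a class other than that of $f_0$; hence even a correct spinor analysis leaves you needing to pass from ``represented by $\spn(f_0)$'' to ``represented by $f_0$''. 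Second, the only general tool for that last step, Duke--Schulze-Pillot, has an \emph{ineffective} threshold (it rests on Siegel-type lower bounds for $L(1,\chi)$); there is no ``effective form'', so ``sufficiently large plus a finite computation'' is not a proof. Your argument would show only that the exceptions in $9\pmod{20}$ are finite in number, with no way to locate them --- precisely the ineffectivity that made Sun's result in the introduction conditional on GRH and that this paper is designed to avoid. What is missing is an elementary, effective conversion of a representation $m=g_0(x,y,z)$, $m\equiv9\pmod{20}$, into a representation by $f_0$: the exact analogue, for this genus, of the paper's Lemma \ref{Lemma n=1,2 mod4 representation of 1,10,10}, which performs such a conversion for the genus of $x^2+10y^2+10z^2$ via the explicit isometries (\ref{Eq. automorph of g(x,y,z)}) and (\ref{Eq. isometry from g to f}) together with parity arguments.
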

\begin{remark}
Note that there are infinitely many positive integers do not satisfy the above assertion if we add the condition that $x,y,z,w\in\N$ in this conjecture. In fact, it is known that  each integer of the form $4^{2r+1}\times2$ $(r\in\N)$ has only a single partition into four squares (cf. \cite[p. 86]{G}), i.e.,
$$2\times4^{2r+1}=(2\times4^r)^2+(2\times4^r)^2+0^2+0^2.$$
Clearly Conjecture \ref{Conjecture power of 4} does not hold for any integer of this type if we add the condition that $x,y,z,w\in\N$.
\end{remark}
In this paper, we confirm this conjecture and obtain the following result.
\begin{theorem}\label{Theorem power of 4}
Every positive integer $n$ can be written as $x^2+y^2+z^2+w^2$ $(x,y,z,w\in\Z)$ with $x+3y\in\{4^k: k=0,1,\cdots\}$. Moreover, if $4\nmid n$, then there are $x,y,z,w\in\Z$ such that $n=x^2+y^2+z^2+w^2$ and $x+3y\in\{1,4\}$.
\end{theorem}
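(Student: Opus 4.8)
The plan is to convert this four-square problem with a linear constraint into a representation problem for a single ternary form. Writing the constraint as $x+3y=4^{k}$ and using the identity $10(x^{2}+y^{2})=(x+3y)^{2}+(3x-y)^{2}$, one gets $10n=16^{k}+v^{2}+10z^{2}+10w^{2}$ with $v=3x-y$. Thus $n=x^{2}+y^{2}+z^{2}+w^{2}$ with $x+3y=4^{k}$ is equivalent to representing $N_{k}:=10n-16^{k}$ by the ternary form $f(v,z,w)=v^{2}+10z^{2}+10w^{2}$ subject to $v\equiv 3\cdot 4^{k}\pmod{10}$; indeed this congruence is exactly what forces $x=(4^{k}+3v)/10$ and $y=(3\cdot 4^{k}-v)/10$ to be integers. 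A first key observation is that this congruence is automatic: since $N_{k}\equiv(3\cdot 4^{k})^{2}\pmod{10}$, every representation has $v\equiv\pm 3\cdot 4^{k}\pmod{10}$, the parity of $v$ being forced by $N_{k}$, and replacing $v$ by $-v$ fixes the remaining sign ambiguity modulo $5$. Hence the whole theorem reduces to the clean statement: for every $n$ there is $k\ge 0$ with $16^{k}\le 10n$ such that $f$ represents $N_{k}$, and for $4\nmid n$ one may take $k\in\{0,1\}$.

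Next I would record the local conditions for $f$. Modulo $5$ the form is just $v^{2}$, so $f$ represents only residues $0,1,4\pmod{5}$; but $N_{k}\equiv-16^{k}\equiv 4\pmod 5$ always, so the $5$-adic condition is satisfied for free. Over $\Z_{2}$ one checks $\langle 5,5\rangle\cong\langle 1,1\rangle$, whence $f\cong\langle 1,2,2\rangle$, and the only $2$-adic obstruction has the shape $N_{k}=4^{a}(8b+7)$. In particular, for $4\nmid n$ the integer $N_{0}=10n-1$ is odd with $N_{0}\equiv 2n-1\not\equiv 7\pmod 8$, so $N_{0}$ is locally represented; this is the source of the admissible value $x+3y=1$.

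The crux, and the step I expect to be the main obstacle, is that $f=\langle 1,10,10\rangle$ is \emph{not} regular: it fails to represent some locally represented integers (for example $34$ and $61$), so local solvability alone does not suffice. The heart of the argument is therefore a representation theorem for $f$ drawn from the arithmetic of ternary quadratic forms. I would compute the genus of $f$ (I expect class number two), identify its genus mate, and use spinor-genus theory together with an explicit sums-of-two-squares analysis of $N_{k}-v^{2}=10(z^{2}+w^{2})$ to pin down exactly which eligible integers $f$ misses, i.e. the spinor-exceptional square classes. Controlling these exceptional classes precisely enough to guarantee a usable $k$ is the delicate part.

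Granting such a representation theorem, the result follows by choosing $k$ to dodge the exceptions. For $4\nmid n$ the two candidates $N_{0}=10n-1$ (odd) and $N_{1}=10n-16$ are both $\equiv 4\pmod 5$, with $N_{0}$ also free of the $2$-adic obstruction; showing that at least one of them lies outside the exceptional set yields $x+3y\in\{1,4\}$. For $4\mid n$ write $n=4^{j}m$ with $4\nmid m$: when $j$ is even, scaling a representation of $m$ with $x+3y\in\{1,4\}$ by $2^{j}$ gives $x+3y\in\{2^{j},2^{j+2}\}\subseteq\{4^{k}\}$; when $j$ is odd (and in general) the abundance of admissible $k$ with $16^{k}\le 10n$ lets one select an $N_{k}$ outside the finitely many exceptional classes, giving $x+3y=4^{k}$ and completing the proof.
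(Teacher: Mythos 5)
Your opening reduction is exactly the paper's: both pass to representing $N_k=10n-16^k$ by $f=x^2+10y^2+10z^2$, with the congruence $v\equiv 3\cdot 4^k\pmod{10}$ coming for free, and you correctly identify the real difficulty, namely that $f$ is not regular. But the tool you propose for that difficulty cannot work. The integers that $f$ misses although its genus represents them --- your own examples $34$ and $61$ --- are \emph{not} spinor exceptions: a primitive spinor-exceptional integer $t$ for a ternary genus of determinant $d$ must have $\Q(\sqrt{-td})$ ramified only at primes dividing $2d$, and here $d=100$ while $\Q(\sqrt{-34})$ and $\Q(\sqrt{-61})$ are ramified at $17$ and $61$ (they also lie in different square classes, so they could not both be spinor exceptions anyway). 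So these are sporadic failures occurring \emph{inside} the spinor genus of $f$, and no computation of spinor-exceptional square classes will ``pin down exactly which eligible integers $f$ misses.'' The only general theorem covering such failures, Duke--Schulze-Pillot, says every sufficiently large integer represented by the spinor genus of $f$ is represented by $f$, but its threshold is ineffective, so you cannot finish by checking small cases; and your key claim that for $4\nmid n$ one of $N_0,N_1$ always works is genuinely false at the level of generality you argue it: $34=10\cdot 5-16$ is locally represented everywhere, yet $N_1$ fails for $n=5$, and nothing in your framework explains which of the two candidates survives. A second gap: for $n=4^jm$ with $j$ odd, scaling gives $x+3y\in\{2^j,2^{j+2}\}$, which are not powers of $4$, and your fallback there is again the unproved representation claim.

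What the paper supplies, and what your sketch is missing, is an effective class-versus-genus argument tailored to the integers that actually occur. The genus of $f$ has two classes, $f$ and $g=4x^2+5y^2+6z^2+4zx$, linked by the rational isometry $f(X,Y,Z)=g\big((X-Y-Z)/2,\,-Y+Z,\,Y+Z\big)$ and the automorph $g(X,Y,Z)=g(X+Z,Y,-Z)$. Lemma~\ref{Lemma n=1,2 mod4 representation of 1,10,10} shows that for $N=10n-\lambda^2$ under precise congruence hypotheses (e.g.\ $n\equiv 1,2\pmod 4$ with $\lambda$ odd, $5\nmid\lambda$; or $n\equiv 3\pmod 4$ with $4\mid\lambda$), any representation $N=g(x,y,z)$ has forced parities, which after applying the automorph can be pulled back through the isometry to give $N\in\q(f)$. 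This is precisely why the paper takes $\varepsilon=1$ when $n\equiv1,2\pmod4$ but $\varepsilon=4$ when $n\equiv3\pmod4$: outside these hypotheses the conversion breaks down, as $34=10\cdot5-4^2$ and $61=10\cdot7-3^2$ show. Finally, for $n=4n'$ with $4\nmid n'$, the paper produces representations of $n'$ with $x+3y\in\{2,8\}$ (Lemma~\ref{Lemma n=1,2 mod4 representation of 1,10,10}(iii) for odd $n'$, and the form $x^2+5y^2+5z^2$ plus the doubling identity for $n'\equiv2\pmod4$), so that scaling by $2$ lands in $\{4,16\}$; this elementary step replaces your ``abundance of admissible $k$'' and closes the odd-$j$ case.
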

\begin{remark}
For example,
\begin{align*}
99997&=(-98)^2+34^2+119^2+274^2\ \text{and}\ -98+3\times34=4,\\
99999&=(-29)^2+10^2+33^2+313^2\ \text{and}\ -29+3\times10=1.
\end{align*}
\end{remark}
In Section 2 we will introduce some notations and prove some lemmas which are key elements in our proofs of theorems. The proofs of our main results will be given in Section 3.

\section{Notations and some preparations}
Throughout this paper, for any prime $p$, we let $\Z_p$ denote the ring of $p$-adic integers, and let $\Z_p^{\times}$ denote the group of invertible elements in $\Z_p$. In addtion, we set
$\Z_p^{\times2}=\{x^2: x\in\Z_p^{\times}\}$ and let $M_3(\Z_p)$ denote the ring of $3\times3$ matrices with entries contained in $\Z_p$. We also adopt the standard notations of quadratic forms (readers may refer to \cite{C,Ki,Oto} for more details). Let
$$f(X,Y,Z)=aX^2+bY^2+cZ^2+rYZ+sZX+tXY$$
be an integral positive definite ternary quadratic form. Its associated matrix is
 $$M_f:=\begin{pmatrix} a & t/2 &s/2 \\t/2 & b &r/2 \\ s/2 & r/2 &c \end{pmatrix}.$$

Let $p$ be an arbitrary prime. We introduce the following notations.
$$\q(f):=\{f(x,y,z):\ (x,y,z)\in\Z^3\}.$$
$$\q_p(f):=\{f(x,y,z):\ (x,y,z)\in\Z_p^3\}.$$
In addition, let $p>2$ be a prime. We say that $f$ is unimodular over $\Z_p$ if its associated matrix $M_f\in M_3(\Z_p)$ and is invertible. We also let $\gen(f)$ denote the set of quadratic forms which are in the genus of $f$.

For any positive integer $n$, we say that $n$ can be represented by $\gen(f)$ if there exists a form $f^*\in\gen(f)$ such that $n\in\q(f^*)$. When this occurs, we write $n\in\q(\gen(f))$. By \cite[Theorem 1.3, p. 129]{C} we know that
\begin{equation}\label{Eq. local-global principle over Z}
n\in \q(\gen(f))\Leftrightarrow n\in \q_p(f)\ \text{for\ all\ primes\ $p$}.
\end{equation}
We now state our first lemma involving local representations over $\Z_2$ (cf. \cite[pp. 186--187]{Jones}).
\begin{lemma}[Jones]\label{Lemma local representation criterion of Jones}
Let $f$ be an integral positive definite ternary quadratic form, and let $n$ be a positive integer. Then
$n\in\q_2(f)$ if and only if
$$f(X,Y,Z)\equiv n\pmod{2^{r+1}}$$
is solvable, where $r$ is the $2$-adic order of $4n$.
\end{lemma}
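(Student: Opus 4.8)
The plan is to prove the two implications separately, dispatching the forward direction by a density argument and concentrating all the real work in the $2$-adic lifting. For the easy direction, if $n\in\q_2(f)$, say $f(\xi,\eta,\zeta)=n$ with $(\xi,\eta,\zeta)\in\Z_2^3$, then since $\Z$ is dense in $\Z_2$ I would pick integers $x,y,z$ with $x\equiv\xi$, $y\equiv\eta$, $z\equiv\zeta\pmod{2^{r+1}}$; because $f$ has integer coefficients, $f(x,y,z)\equiv f(\xi,\eta,\zeta)=n\pmod{2^{r+1}}$, so the congruence is solvable.

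The substantive direction is the converse, whose engine is a Hensel-type lifting lemma for the quadratic map $f$. Writing $A=2M_f\in M_3(\Z_2)$ for the integral Gram matrix, so that $f(\mathbf v)=\frac12\mathbf v^{\top}A\mathbf v$ and its gradient is $A\mathbf v$, the lemma I would isolate is: if $f(\mathbf v_0)\equiv n\pmod{2^{2t+1}}$ for some $\mathbf v_0\in\Z_2^3$ at which the $2$-adic valuation of $A\mathbf v_0$ is at most $t$, then $n\in\q_2(f)$. This follows from the exact identity $f(\mathbf v_0+2^{t+1}\mathbf h)=f(\mathbf v_0)+2^{t+1}(A\mathbf v_0)\cdot\mathbf h+2^{2t+2}f(\mathbf h)$: after dividing out the common power of $2$, the linear term in $\mathbf h$ dominates the quadratic correction, so one solves for $\mathbf h$ by ordinary single-variable Hensel lifting in the coordinate where $A\mathbf v_0$ attains its minimal valuation. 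In particular, a congruence solution $\mathbf v_0$ modulo $2^{r+1}$ lifts at once whenever the gradient is shallow, i.e. whenever $2\,v_2(A\mathbf v_0)\le r$, since then $\mathbf v_0$ is already a solution modulo $2^{2t+1}$ with $t=v_2(A\mathbf v_0)$.

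The main obstacle is therefore the genuinely dyadic situation in which $A\mathbf v_0$ is $2$-adically deep for every congruence solution --- the prototype being a form all of whose mixed coefficients are even, whose gradient is identically even, so that first-order lifting fails. Here I would pass to a Jordan splitting of $f$ over $\Z_2$, expressing $f$ as an orthogonal sum of forms unimodular over $\Z_2$ scaled by powers of $2$, and track the representation block by block; a deep gradient forces $\mathbf v_0$ into a block of higher scale, where one rescales and recurses. The role of the exponent $r=v_2(4n)=v_2(n)+2$ is precisely that, once the even power $2^{v_2(n)}$ carried by $n$ is stripped off, the residual $2$-adic unit need only be pinned down modulo $8$ --- a unit being a $2$-adic square exactly when it is $\equiv1\pmod8$ --- and the two extra powers supplied by the factor $4$ furnish exactly this mod-$8$ room, the final ``$+1$'' being the Hensel slack. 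Verifying that the resulting congruence conditions can be met simultaneously across the Jordan blocks, uniformly in $\mathrm{disc}(f)$, is the delicate heart of the argument and is where I expect essentially all of the difficulty to reside.
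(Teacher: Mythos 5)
Your easy direction (density of $\Z$ in $\Z_2$ plus integrality of $f$) is fine, and your lifting lemma is correct as stated: the identity $f(\mathbf v_0+2^{t+1}\mathbf h)=f(\mathbf v_0)+2^{t+1}(A\mathbf v_0)\cdot\mathbf h+2^{2t+2}f(\mathbf h)$, combined with one-variable Hensel lifting in a coordinate where $A\mathbf v_0$ attains its minimal valuation, does lift any solution modulo $2^{2t+1}$ whose gradient has valuation at most $t$. The gap is that this covers only the ``shallow gradient'' case $2v_2(A\mathbf v_0)\le r$, and for the complementary case you offer only a plan (Jordan splitting, rescale, recurse) which you yourself call ``the delicate heart of the argument'' and do not carry out. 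That case is neither vacuous nor exotic: take $f=X^2+2Y^2+8Z^2$ and $n=10$, so $v_2(n)=1$, $r=3$, and the hypothesis is a solution of $f\equiv 10\pmod{16}$. Checking residues, every such solution has $X\equiv 0\pmod 4$ and $Y,Z$ odd, so the gradient $(2x,4y,16z)$ has $2$-adic valuation exactly $2$ at \emph{every} congruence solution, while your lemma needs valuation at most $1$ (or else a solution modulo $2^5$, which the hypothesis does not supply). Thus your lifting machinery applies to no solution of the given congruence, and the entire burden falls on the unexecuted block-by-block recursion --- including the bookkeeping of how much precision survives when one passes to a deeper Jordan constituent and divides out its scale, which is exactly where the specific exponent $r+1=v_2(n)+3$ must be used. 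As written, the substantive implication is reduced to its hardest case, not proved.

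For comparison: the paper itself gives no proof of this lemma; it is quoted as a known result of Jones, with the citation to pp.~186--187 of his book, and the content of those pages is precisely the deep-gradient analysis you deferred. So your route is not wrong in spirit --- Hensel lifting together with the Jordan splitting of $\Z_2$-lattices is indeed how such local representation criteria are established (compare O'Meara's treatment of representations over $\Z_2$) --- but to count as a proof it would have to make the recursion precise: state what ``falling into a deeper block'' means for a congruence solution, show the induced congruence for the rescaled sub-form retains enough precision, and verify termination. None of that is present, so the proposal has a genuine gap.
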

We now give our next lemma which concerns the global representations by the form $x^2+10y^2+10z^2$.
\begin{lemma}\label{Lemma n=1,2 mod4 representation of 1,10,10}
{\rm (i)} Let $n\equiv1,2\pmod4$ be a positive integer, and let $0<\lambda\le\sqrt{10n}$ be an odd integer with $5\nmid\lambda$. Then there exist $x,y,z\in\Z$ such that
$$10n-\lambda^2=x^2+10y^2+10z^2.$$

{\rm (ii)} Let $n\equiv3\pmod4$ be a positive integer, and let $0<\delta\le\sqrt{10n}$ be an integer with $4\mid\delta$ and $5\nmid\delta$. Then there are $x,y,z\in\Z$ such that
$$10n-\delta^2=x^2+10y^2+10z^2.$$

{\rm (iii)} Let $n$ be a positive odd integer, and let $0<\mu\le\sqrt{10n}$ be an integer with $\mu\equiv2\pmod4$ and $5\nmid\mu$. Then there are $x,y,z\in\Z$ such that
$$10n-\mu^2=x^2+10y^2+10z^2.$$
\end{lemma}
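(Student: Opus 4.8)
The plan is to treat the three parts uniformly. Write $g(x,y,z)=x^2+10y^2+10z^2$ and, in cases (i), (ii), (iii), put $m:=10n-\lambda^2$, $m:=10n-\delta^2$, $m:=10n-\mu^2$ respectively. Since $5$ divides none of $\lambda,\delta,\mu$, the subtracted square is not a multiple of $5$, so it cannot equal $10n$; together with the bound $(\cdot)\le\sqrt{10n}$ this makes $m$ a positive integer with $5\nmid m$, and it suffices to prove $m\in\q(g)$. The backbone is the local-global principle \eqref{Eq. local-global principle over Z}: I would first establish $m\in\q(\gen(g))$ by checking $m\in\q_p(g)$ at every prime $p$, and then promote this to $m\in\q(g)$ through a closer study of the genus of $g$.

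For the local conditions, observe $\det M_g=100=2^2\cdot5^2$. For every odd prime $p\neq5$ the form $g$ is unimodular over $\Z_p$, hence universal, so $m\in\q_p(g)$ with nothing to check. At $p=5$ one has $m\equiv-\lambda^2\pmod5$; because $-1\equiv2^2\pmod5$ and $5\nmid\lambda$, this gives $m\equiv(2\lambda)^2\pmod5$, a nonzero square, so $m$ is a unit square in $\Z_5$ and $m=x^2$ (with $y=z=0$) already solves the equation $5$-adically. At $p=2$ I would apply Jones's criterion (Lemma \ref{Lemma local representation criterion of Jones}) with $r=\ord_2(4m)$: the parity and divisibility hypotheses in the three cases---$\lambda$ odd with $n\equiv1,2\pmod4$, $4\mid\delta$ with $n\equiv3\pmod4$, and $\mu\equiv2\pmod4$ with $n$ odd---are exactly what is needed to fix the $2$-adic order of $m$ and to make $g(X,Y,Z)\equiv m\pmod{2^{r+1}}$ solvable. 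These three verifications yield $m\in\q(\gen(g))$.

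The decisive point is the passage from $\gen(g)$ to $g$ itself, and here is where the real work lies. The genus of $g$ does not consist of $g$ alone: its class number exceeds one, so $m\in\q(\gen(g))$ does not by itself give $m\in\q(g)$. I would therefore compute $\gen(g)$ explicitly, exhibit the companion class, and determine the spinor-genus decomposition by evaluating the spinor-norm groups at the ramified primes $2$ and $5$. The aim is to extract the congruence condition that singles out the spinor genus of $g$ inside its genus, and then to verify that the hypotheses of each of the three cases force $m$ into the spinor genus of $g$. Since a positive definite ternary form represents every eligible integer lying in its spinor genus, apart from finitely many spinor-exceptional integers, this would deliver $m\in\q(g)$.

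The main obstacle is precisely the control of $\q(\gen(g))\setminus\q(g)$: I must ensure that the integers $m=10n-(\cdot)^2$ produced under the stated congruences never land among the spinor-exceptional integers carried only by the other class. This calls for a careful accounting of the spinor characters at $2$ and $5$, supplemented by a direct check of the finitely many small values, and it is the technical heart of the argument. By contrast the local computations of the second paragraph, although they split into cases, are routine once the $2$-adic and $5$-adic data are matched against \eqref{Eq. local-global principle over Z} and Lemma \ref{Lemma local representation criterion of Jones}.
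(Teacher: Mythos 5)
Your first two paragraphs (positivity of $m$, unimodularity away from $2$ and $5$, the $5$-adic square argument, Jones's criterion at $2$, and the passage to $\q(\gen(\cdot))$ via (\ref{Eq. local-global principle over Z})) are sound and coincide with what the paper does. The gap is in your third paragraph, and it is fatal to the route you propose: the plan presupposes that the two classes in the genus of $x^2+10y^2+10z^2$ (your $g$, the paper's $f$; I use the paper's notation below, so the companion class is $g=4X^2+5Y^2+6Z^2+4ZX$) fall into \emph{different} spinor genera, separable by spinor characters at $2$ and $5$. They do not: this genus consists of a \emph{single} spinor genus containing both classes, so no congruence condition can single out $\q(f)$ inside $\q(\gen(f))$. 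One can check this directly: at every prime the local spinor norm group contains all local units --- for $p\ne2,5$ by unimodularity; at $p=2$ because $\langle1,10,10\rangle$ contains vectors of norms $1$, $11$, $21$ (units in the square classes of $1,3,5$ modulo $8$), whose integral symmetries give $\theta(O^{+}(L_2))\supseteq\Z_2^{\times}(\Q_2^{\times})^{2}$; at $p=5$ because the symmetries in vectors of norms $10$ and $20$ have product of spinor norm $200$, the class of the nonresidue $2$, giving $\theta(O^{+}(L_5))\supseteq\Z_5^{\times}(\Q_5^{\times})^{2}$. Alternatively, a concrete contradiction: the squarefree integer $6=g(1,0,-1)$ is (primitively) represented by the genus but not by $x^2+10y^2+10z^2$, yet $6$ cannot be a spinor exceptional integer because $\Q(\sqrt{-6})$ is ramified at $3$, which does not divide $2\det M_f$; if the two classes formed distinct spinor genera, every non-exceptional integer represented by the genus would be represented by both classes. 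The same phenomenon occurs with $1$ and $5$, each represented by exactly one of the two classes.

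Once the spinor route is closed, the principle you invoke --- ``a positive definite ternary form represents every eligible integer lying in its spinor genus, apart from finitely many exceptions'' --- can only mean the Duke--Schulze-Pillot theorem, and its exceptional set, though finite, comes with an ineffective bound: your proposed ``direct check of the finitely many small values'' cannot be carried out, and an ineffective version of the lemma is useless here, since the lemma must hold for \emph{all} $n$ (it feeds the inductions in Theorems \ref{Theorem x+3y} and \ref{Theorem power of 4}, whose base cases are computer-verified only up to explicit limits). The paper closes the genus-to-class gap by entirely different, effective means: supposing $10n-\lambda^2$ (resp.\ $10n-\delta^2$, $10n-\mu^2$) is represented by the companion form $g$, it pins down the parities of a representing triple from the $2$-adic shape of these particular integers, adjusts them with the explicit automorph $g(X,Y,Z)=g(X+Z,Y,-Z)$, and then transfers the representation to $f$ through the explicit identity $f(X,Y,Z)=g\bigl(\tfrac{X-Y-Z}{2},\,-Y+Z,\,Y+Z\bigr)$. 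Some explicit transfer argument of this kind, exploiting the special congruence classes of $10n-\lambda^2$, $10n-\delta^2$, $10n-\mu^2$, is the ingredient your sketch is missing, and for this genus it cannot be replaced by spinor machinery.
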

\begin{proof}
(i). Let $f(X,Y,Z)=X^2+10Y^2+10Z^2$. For any prime $p\ne2,5$, it is clear that $f(X,Y,Z)$ is unimodular over $\Z_p$ and hence $\q_p(f)=\Z_p$. As $5\nmid\lambda$, we have $10n-\lambda^2\in\q_5(f)$ by the local square theorem (cf. \cite[63:1]{Oto}). When $p=2$, since $n\equiv1,2\pmod4$, we have $10n-\lambda^2\equiv1,3\pmod8$. By the local square theorem again we obtain that $10n-\lambda^2\in\Z_2^{\times2}$ if $n\equiv1\pmod4$ and that $10n-\lambda^2\in3\Z_2^{\times2}$ if $n\equiv2\pmod4$. This implies $10n-\lambda^2\in\q_2(f)$ and hence $10n-\lambda^2\in\q(\gen(f))$.

There are two classes in $\gen(f)$ and the one not containing $f$ has a representative $g(X,Y,Z)=4X^2+5Y^2+6Z^2+4ZX$. By (\ref{Eq. local-global principle over Z}) we have either $10n-\lambda^2\in\q(f)$ or $10n-\lambda^2\in\q(g)$. If $10n-\lambda^2\in\q(f)$, then we are done. Suppose now
$10n-\lambda^2\in\q(g)$, i.e., there are $x,y,z\in\Z$ such that
$$10n-\lambda^2=g(x,y,z)=4x^2+5y^2+6z^2+4zx.$$
Then clearly $2\nmid y$. As $10n-\lambda^2\equiv2n-1\equiv 5+2z^2\pmod4$, we obtain $z\equiv n+1\pmod2$. If $n\equiv1\pmod4$, then we have $10n-\lambda^2\equiv 1\equiv4x^2+5\pmod8$ and hence $x\equiv1\pmod2$. By the above we have $x-y-z\equiv0\pmod2$ in the case $n\equiv1\pmod4$. In the case $n\equiv2\pmod4$, as $z\equiv1\pmod2$, by the equality
\begin{equation}\label{Eq. automorph of g(x,y,z)}
g(X,Y,Z)=g(X+Z,\ Y,\ -Z),
\end{equation}
there must exist $x',y',z'\in\Z$ with $2\mid x'-y'-z'$ such that $10n-\lambda^2=g(x',y',z')$. One can also easily verify the following equality:
\begin{equation}\label{Eq. isometry from g to f}
f(X,Y,Z)=g\(\frac{X-Y-Z}{2},\ -Y+Z,\ Y+Z\).
\end{equation}
By this equality and the above discussion, it is easy to see that $10n-\lambda^2\in\q(f)$.

(ii). Let notations be as above. With the same reasons as in (i), we have $10n-\delta^2\in\q_p(f)$ for any prime $p\ne2$. When $p=2$, by the local square theorem we have
\begin{equation}\label{Eq. 2-adic units}
\Z_2^{\times}\subseteq\{2x^2+5y^2+5z^2: x,y,z\in\Z_2\}.
\end{equation}
Hence $5n-\delta^2/2\equiv2x^2+5y^2+5z^2\pmod8$ is solvable. This implies that the congruence equation
$10n-\delta^2\equiv f(x,y,z)\pmod{16}$ is solvable. By Lemma \ref{Lemma local representation criterion of Jones} and the above, we obtain $10n-\delta^2\in\q(\gen(f))$.

By (\ref{Eq. local-global principle over Z}) we have either $10n-\delta^2\in\q(f)$ or $10n-\delta^2\in\q(g)$. If $10n-\delta^2\in\q(f)$, then we are done. Suppose now $10n-\delta^2\in\q(g)$, i.e., there are $x,y,z$ such that $10n-\delta^2=g(x,y,z)$. Then clearly $2\mid y$. As $10n-\delta^2\equiv 2\equiv 2z^2\pmod4$, we obtain $2\nmid z$. Hence by Eq. (\ref{Eq. automorph of g(x,y,z)}) there must exist $x',y',z'\in\Z$ with $x'-y'-z'\equiv0\pmod2$ such that $10n-\delta^2=g(x',y',z')$. By Eq. (\ref{Eq. isometry from g to f}) we clearly have $10n-\delta^2\in\q(f)$.

{\rm (iii)} Let notations be as above. Clearly we have $10n-\mu^2\in\q_p(f)$ for any prime $p\ne2$. When $p=2$, by (\ref{Eq. 2-adic units}) we obtain that
the equation $5n-\mu^2/2\equiv 2x^2+5y^2+5z^2\pmod8$ is solvable. This implies that the equation
$10n-\mu^2\equiv x^2+10y^2+10z^2\pmod{16}$ is solvable. By Lemma \ref{Lemma local representation criterion of Jones} and the above, we have $10n-\mu^2\in\q(\gen(f))$.

By (\ref{Eq. local-global principle over Z}) we have either $10n-\mu^2\in\q(f)$ or $10n-\mu^2\in\q(g)$. If $10n-\mu^2\in\q(f)$, then we are done. Suppose now $10n-\mu^2\in\q(g)$, i.e., there are $x,y,z$ such that $10n-\mu^2=g(x,y,z)$. Then clearly $2\mid y$. Since $10n-\mu^2\equiv 2\equiv 2z^2\pmod4$, we get $2\nmid z$. Then by Eq. (\ref{Eq. automorph of g(x,y,z)}) there are $x',y',z'\in\Z$ with $x'-y'-z'\equiv0\pmod2$ such that $10n-\mu^2=g(x',y',z')$. By Eq. (\ref{Eq. isometry from g to f}) we clearly have $10n-\mu^2\in\q(f)$. This completes the proof.
\end{proof}
\begin{remark}
Note that
$$\{x^2+10y^2+10z^2: x,y,z\in\Z\}=\{x^2+5y^2+5z^2: x,y,z\in\Z\ \text{and}\ 2\mid y-z\}.$$
Sun have studied the latter set in \cite{S15}.
\end{remark}

Sun and his cooperator \cite[Theorem 1.1(ii)]{SS} proved that every positive integer $n$ can be written as $x^2+y^2+z^2+w^2$ $(x,y,z,w\in\N)$ with $x+2y$ a square. Motivated by this, we obtain the following stronger result.
\begin{lemma}\label{Lemma a stronger result on x+2y}
Every odd integer $n\ge8\times10^6$ can be written as $x^2+y^2+z^2+w^2$ $(x,y,z,w\in\N)$ with $x\le y$ and $x+2y$ a square.
\end{lemma}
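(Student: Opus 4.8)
The plan is to convert the condition ``$x+2y$ is a perfect square'' into a representation problem for the ternary form $X^2+5Y^2+5Z^2$, and then to obtain the ordering condition $x\le y$ essentially for free by arranging that the number we represent is small. First I would set $x+2y=t^2$ for a nonnegative integer $t$ and use the identity
$$5(x^2+y^2)=(x+2y)^2+(2x-y)^2=t^4+v^2,\qquad v:=2x-y.$$
Multiplying $n=x^2+y^2+z^2+w^2$ by $5$, the problem becomes: find an integer $t$ and a representation $5n-t^4=v^2+5z^2+5w^2$ with $v\equiv 2t^2\pmod5$, the latter guaranteeing that $x=(t^2+2v)/5$ and $y=(2t^2-v)/5$ are integers. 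These formulas give $x+2y=t^2$ and $2x-y=v$ back, and one checks $x\ge0\Leftrightarrow v\ge-t^2/2$ while $x\le y\Leftrightarrow v\le t^2/3$; so I need a representation whose distinguished variable $v$ lies in $[-t^2/2,\,t^2/3]$.

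The key observation is that $t$ is mine to choose, and I will use it to make $M:=5n-t^4$ small. Taking $t$ as large as possible subject to $t^4\le5n$, i.e.\ $t$ close to $(5n)^{1/4}$, forces $0\le M\le (t+1)^4-t^4=O(n^{3/4})$. For any representation $M=v^2+5z^2+5w^2$ one then has $|v|=\sqrt{M-5z^2-5w^2}\le\sqrt M=O(n^{3/8})$, whereas $t^2\asymp\sqrt n$, so $t^2/3$ is of order $\sqrt n$ and dominates $|v|$ once $n$ is large. Thus the range $-t^2/2\le v\le t^2/3$ holds automatically. Quantitatively, the requirement $\sqrt M\le t^2/3$ is $t^4\ge 4.5\,n$, which together with $t^4\le5n$ confines $t$ to a short window near $(5n)^{1/4}$ containing $O(n^{1/4})$ integers; it is exactly the demand that this window contain an admissible $t$ that produces an explicit threshold of the size $8\times10^6$. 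With such a $t$ fixed, $x=(t^2+2v)/5\ge(t^2-2\sqrt M)/5>0$, $y\ge(2t^2-\sqrt M)/5>0$, and $v\le\sqrt M\le t^2/3$ yields $x\le y$.

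It remains to represent the small number $M=5n-t^4$ by $v^2+5z^2+5w^2$ with $v\equiv 2t^2\pmod5$, and here I would invoke Lemma~\ref{Lemma n=1,2 mod4 representation of 1,10,10}. By the Remark following it, every value of $x^2+10y^2+10z^2$ is of the shape $a^2+5(b+c)^2+5(b-c)^2$, so it suffices to represent $M$ by $x^2+10y^2+10z^2$. I would write $M=10m-\lambda^2$ for a suitable auxiliary $\lambda$; such $\lambda$ exists because the residues of $t^4-5n$ modulo $10$ (namely $0,1,5,6$, using that $n$ is odd) are all squares mod $10$. Checking the parity/divisibility hypotheses of the appropriate case (i), (ii) or (iii), the lemma gives $M=a^2+10b^2+10c^2$ with $|a|\le\sqrt M$; setting $z=b+c$, $w=b-c$ and $v=\pm a$, with the sign fixed so that $v\equiv 2t^2\pmod5$ (this is consistent since $a^2\equiv M\equiv(2t^2)^2\pmod5$ whenever $5\nmid t$), recovers the required representation. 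Taking $|z|,|w|$ makes the last two coordinates nonnegative, and the chain of identities then returns $n=x^2+y^2+z^2+w^2$.

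The main obstacle is the coordination in the last two steps. Within the short admissible window $4.5\,n\le t^4\le5n$ I must select $t$ so that, simultaneously, $5\nmid t$ (to control $v\bmod5$), the forced sign of $v$ still keeps $v$ in range (automatic once $\sqrt M\le t^2/3$), and $M=5n-t^4$ can be packaged as $10m-\lambda^2$ meeting the hypotheses of one of the three cases of Lemma~\ref{Lemma n=1,2 mod4 representation of 1,10,10}. Since that window holds only $O(n^{1/4})$ integers, proving that a good $t$ always exists and extracting the precise explicit bound $8\times10^6$ is the delicate, computation-heavy part; the algebraic reduction and the size estimates above are routine by comparison.
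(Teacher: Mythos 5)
Your algebraic reduction is exactly the paper's (with your $v$ equal to the paper's $-s$): pick $t$ with $4.5\,n\le t^4\le 5n$, represent $5n-t^4=v^2+5z^2+5w^2$ with $v\equiv 2t^2\pmod{5}$, set $x=(t^2+2v)/5$, $y=(2t^2-v)/5$, and the window condition makes $0\le x\le y$ automatic; the size estimates are also the paper's. The gap is in the representation step. Note first that you do not need the form $x^2+10y^2+10z^2$ at all: your identities impose no parity link between $z$ and $w$, so representing $M=5n-t^4$ by $x^2+5y^2+5z^2$ suffices, and insisting on the $(1,10,10)$ form is a genuine (and costly) strengthening. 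The fatal point is that Lemma \ref{Lemma n=1,2 mod4 representation of 1,10,10} cannot be invoked when $n\equiv 1\pmod{8}$. Writing $M=10m'-\lambda^2$, its hypotheses force: in case (i), $\lambda$ odd and $m'\equiv1,2\pmod{4}$, hence $M\equiv 1,3\pmod{8}$; in case (ii), $4\mid\delta$ and $m'\equiv3\pmod{4}$, hence $M\equiv 6\pmod{8}$; in case (iii), $\mu\equiv2\pmod{4}$ and $m'$ odd, hence $M\equiv 2\pmod{4}$. So the lemma only reaches $M\equiv1,3\pmod{8}$ or $M\equiv2\pmod{4}$. But for $n\equiv1\pmod{8}$ one has $M=5n-t^4\equiv 4\pmod{8}$ when $t$ is odd and $M\equiv5\pmod{8}$ when $t$ is even; no $t$ whatsoever (let alone one in your short window) puts $M$ within reach of any of the three cases. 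The ``delicate, computation-heavy part'' you defer is therefore not a computation but an impossibility on a full residue class of $n$.

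The missing idea is the one the paper uses instead: Dickson's determination (\ref{Eq. exceptional set of 1,5,5}) of $\q(h)$ for $h=x^2+5y^2+5z^2$, whose exceptional set consists only of the classes $\pm2\pmod{5}$ and the numbers $4^k(8l+7)$. Choosing the parity of $t$ according to $n$, namely $t\equiv\frac{n-1}{2}\pmod{2}$ (this parity requirement is why the window must have length $>2$, which is what produces the threshold $8\times10^6$), forces $M=5n-t^4\equiv 1,2\pmod{4}$, which rules out $4^k(8l+7)$; and $M\equiv -t^4\equiv 0,4\pmod{5}$ rules out $\pm2\pmod{5}$. Hence $M\in\q(h)$ with no search over $t$ and no mod-$5$ restriction on $t$ needed, and the rest of your argument (sign choice for $v$, the bounds giving $0\le x\le y$) then goes through verbatim.
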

\begin{proof}
We first note that $$8\times10^6>\bigg{[}\(\frac{2}{5^{1/4}-(4.5)^{1/4}}\)^4\bigg{]},$$ where $[\cdot]$ denotes the floor function. If $n\ge8\times10^6$, then $(5n)^{1/4}-(4.5n)^{1/4}>2$ and hence there is an integer $(4.5n)^{1/4}\le m\le (5n)^{1/4}$ such that $m\equiv\frac{n-1}{2}\pmod2$.

Now let $h(x,y,z)=x^2+5y^2+5z^2$. By \cite[pp. 112--113]{D39} we have
\begin{equation}\label{Eq. exceptional set of 1,5,5}
\q(h)=\{x\in\N: x\not\equiv\pm2\pmod5\ \text{and}\ x\ne 4^k(8l+7)\ \text{for\ any}\ k,l\in\N\}.
\end{equation}
As $5n-m^4\ge0,5n-m^4\equiv1,2\pmod4$ and $5n-m^4\not\equiv\pm2\pmod5$, we have $5n-m^4\in\q(h)$ by (\ref{Eq. exceptional set of 1,5,5}). Hence there exist $s\in\Z$ and $z,w\in\N$ such that $5n-m^4=s^2+5z^2+5w^2$. Clearly $-\sqrt{5n-m^4}\le s\le\sqrt{5n-m^4}$.
Replacing $s$ by $-s$ if necessary, we may assume that $s\in\Z$ and $s\equiv -2m^2\pmod5$. By the inequality
\begin{equation*}
s+2m^2\ge-\sqrt{5n-m^4}+2m^2=\frac{5m^4-5n}{\sqrt{5n-m^4}+2m^2}>0
\end{equation*}
we may write $s+2m^2=5y$ for some $y\in\N$. This gives
$$5n-m^4=(5y-2m^2)^2+5z^2+5w^2,$$
and hence we get
\begin{equation}\label{Eq. A in the proof of Lemma a stronger result on x+2y}
n=(m^2-2y)^2+y^2+z^2+w^2.
\end{equation}
Let $x:=m^2-2y$. Then we have
$$5x=5m^2-10y=m^2-2s\ge m^2-2\sqrt{5n-m^4}=\frac{5(m^4-4n)}{m^2+2\sqrt{5n-m^4}}>0.$$
This gives $x>0$. Moreover,
$$5(y-x)=3s+m^2\ge-3\sqrt{5n-m^4}+m^2=\frac{10(m^4-4.5n)}{m^2+3\sqrt{5n-m^4}}\ge0.$$
This gives $x\le y$.
In view of the above, we can write $n=x^2+y^2+z^2+w^2$ with $x,y,z,w\in\N$, $x\le y$ and $x+2y=m^2$. This completes the proof.
\end{proof}

We conclude this section with the following lemma.
\begin{lemma}\label{Lemma x+3y=2 square}
For every integer $n\not\equiv0\pmod4$ with $n\ge4\times10^8$, we can write $n=x^2+y^2+z^2+w^2$ $(x,y,z,w\in\N)$ with $x+3y=2m^2$ for some $m\in\N$.
\end{lemma}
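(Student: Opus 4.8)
The plan is to exploit the identity
\begin{equation*}
10(x^2+y^2)=(x+3y)^2+(3x-y)^2,
\end{equation*}
which converts the constraint $x+3y=2m^2$ into a representation problem for the ternary form $f(X,Y,Z)=X^2+10Y^2+10Z^2$ of Lemma \ref{Lemma n=1,2 mod4 representation of 1,10,10}. Indeed, if $x+3y=2m^2$ then $10n=4m^4+(3x-y)^2+10z^2+10w^2$, so everything reduces to writing $10n-4m^4=v^2+10z^2+10w^2$ with $v=3x-y$, after which one recovers $x=(2m^2+3v)/10$ and $y=(6m^2-v)/10$. I would split according to whether $n$ is odd or $n\equiv2\pmod4$, since the $2$-adic nature of $10n-4m^4$ (and hence the applicable case of Lemma \ref{Lemma n=1,2 mod4 representation of 1,10,10}) differs between them.

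For odd $n$ I would choose an integer $m$ with $m$ odd and $5\nmid m$ lying in the window $\left(\tfrac94 n\right)^{1/4}\le m\le\left(\tfrac52 n\right)^{1/4}$. The upper bound makes $\mu:=2m^2\le\sqrt{10n}$ with $\mu\equiv2\pmod4$ and $5\nmid\mu$, so Lemma \ref{Lemma n=1,2 mod4 representation of 1,10,10}(iii) supplies $v,z,w\in\Z$ with $10n-4m^4=v^2+10z^2+10w^2$. A short congruence check shows $v$ is automatically even and $v\equiv\pm m^2\pmod5$; replacing $v$ by $-v$ if necessary I arrange $v\equiv6m^2\pmod{10}$, which forces $x=(2m^2+3v)/10$ and $y=(6m^2-v)/10$ to be integers satisfying $x+3y=2m^2$ and $x^2+y^2+z^2+w^2=n$. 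The lower bound on $m$ gives $|v|\le\sqrt{10n-4m^4}\le\tfrac23 m^2$, so $-\tfrac23 m^2\le v\le\tfrac23 m^2$, whence $x\ge0$ and $y\ge0$; taking $|z|,|w|$ then yields the desired representation with $x,y,z,w\in\N$.

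For $n\equiv2\pmod4$ none of the three cases of Lemma \ref{Lemma n=1,2 mod4 representation of 1,10,10} applies, since $\lambda=2m^2$ is even while $n$ is neither odd nor $\equiv3\pmod4$; so instead I would reduce to Lemma \ref{Lemma a stronger result on x+2y} by doubling. Writing $n=2n'$ with $n'$ odd and $n'\ge2\times10^8>8\times10^6$, Lemma \ref{Lemma a stronger result on x+2y} provides $n'=a^2+b^2+c^2+d^2$ with $a\le b$ and $a+2b=k^2$. Using $2a^2+2b^2=(b-a)^2+(a+b)^2$ and $2c^2+2d^2=(c+d)^2+(c-d)^2$, I set $X=b-a$, $Y=a+b$, $Z=c+d$, $W=|c-d|$, all in $\N$; these satisfy $X^2+Y^2+Z^2+W^2=2n'=n$ together with
\begin{equation*}
X+3Y=(b-a)+3(a+b)=2(a+2b)=2k^2,
\end{equation*}
which settles this case.

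The main obstacle is guaranteeing an admissible $m$ in the odd case. The window has length $\left((5/2)^{1/4}-(9/4)^{1/4}\right)n^{1/4}\approx0.033\,n^{1/4}$, while the integers that are odd and coprime to $5$ (i.e. those $\equiv1,3,7,9\pmod{10}$) have consecutive gaps at most $4$. Hence I must verify that the window length exceeds $4$, which happens exactly once $n$ is large enough; the threshold $n\ge4\times10^8$ gives length $\approx4.6>4$ and so ensures a valid $m$ exists. This quantitative bookkeeping, together with the observation that $n\equiv2\pmod4$ lies outside the scope of Lemma \ref{Lemma n=1,2 mod4 representation of 1,10,10} and must therefore be routed through the doubling argument, are the two points needing care; the remaining steps are routine congruence and positivity computations.
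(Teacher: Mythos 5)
Your proposal is correct and follows essentially the same route as the paper's proof: for odd $n$ it invokes Lemma \ref{Lemma n=1,2 mod4 representation of 1,10,10}(iii) with $\mu=2m^2$ for an odd $m$ prime to $5$ in exactly the window $\bigl[(9n/4)^{1/4},(5n/2)^{1/4}\bigr]$, with the same sign-adjustment congruence $v\equiv\pm6m^2\pmod{10}$ and the same positivity bounds, while for $n\equiv2\pmod4$ it doubles a representation from Lemma \ref{Lemma a stronger result on x+2y} via the identity $2(a^2+b^2)=(b-a)^2+(a+b)^2$. The only differences are presentational (solving for $x,y$ directly from the identity $10(x^2+y^2)=(x+3y)^2+(3x-y)^2$ rather than substituting $t+6m^2=10y$, and making explicit the congruence check the paper leaves implicit).
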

\begin{proof}
We divide our proof into the following two cases.

{\bf Case 1.} $n\equiv2\pmod4$.

In this case, we write $n=2n'$ for some odd integer $n'\ge4\times10^7$. By Lemma \ref{Lemma a stronger result on x+2y} we can write $n'=x'^2+y'^2+z'^2+w'^2$ $(x',y',z',w'\in\N)$ with $x'\le y'$, $z'\le w'$ and $x'+2y'=m_0^2$ for some $m_0\in\N$. Then
$$n=2n'=(y'-x')^2+(y'+x')^2+(w'-z')^2+(z'+w')^2.$$
Letting $x:=y'-x', y:=y'+x',z:=w'-z'$ and $w:=z'+w'$, we obtain that
$$n=x^2+y^2+z^2+w^2$$
with $x+3y=(y'-x')+3(y'+x')=2(x'+2y')=2m_0^2$.

{\bf Case 2.} $n$ is odd.

We first note that $$4\times10^8>\bigg{[}\(\frac{4\sqrt{2}}{10^{1/4}-9^{1/4}}\)^4\bigg{]}.$$
If $n\ge4\times10^8$, then we have $\frac{(10n)^{1/4}}{\sqrt{2}}-\frac{(9n)^{1/4}}{\sqrt{2}}>4$ and hence there exists an integer $\frac{(9n)^{1/4}}{\sqrt{2}}\le m\le\frac{(10n)^{1/4}}{\sqrt{2}}$ such that $2\nmid m$ and $5\nmid m$. By Lemma \ref{Lemma n=1,2 mod4 representation of 1,10,10} (iii) there exist $t\in\Z$ and $z,w\in\N$ such that
$$10n-4m^4=t^2+10z^2+10w^2.$$
Clearly $-\sqrt{10n-4m^4}\le t\le\sqrt{10n-4m^4}$. Replacing $t$ by $-t$ if necessary, we may assume $t\equiv -6m^2\pmod{10}$. By the inequality
\begin{equation*}
t+6m^2\ge-\sqrt{10n-4m^4}+6m^2=\frac{10(4m^4-n)}{6m^2+\sqrt{10n-4m^4}}>0
\end{equation*}
we can write $t+6m^2=10y$ for some $y\in\N$. This implies
$$10n-4m^4=(10y-6m^2)^2+10z^2+10w^2,$$
and hence we get
$$n=(2m^2-3y)^2+y^2+z^2+w^2.$$
Let $x:=2m^2-3y$. Then we have
$$10x=2m^2-3t\ge2m^2-3\sqrt{10n-4m^4}=\frac{10(4m^4-9n)}{2m^2+3\sqrt{10n-4m^4}}\ge0.$$
This gives $x\ge0$. In view of the above, we can write $n=x^2+y^2+z^2+w^2$ $(x,y,z,w\in\N)$ with $x+3y=2m^2$.

This completes the proof of Lemma \ref{Lemma x+3y=2 square}.
\end{proof}

\section{Proofs of the main results}

{\bf Proof of Theorem \ref{Theorem x+3y}.}\
We prove our result by induction on $n$. When $n<4\times10^9$, we can verify the desired result by computer. Assume now $n\ge4\times10^9$.
We divide our remaining proof into following cases.

{\bf Case 1.} $n\equiv0\pmod4$.

If $16\mid n$, then the desired result follows immediately from the induction hypothesis. We now assume $16\nmid n$. Then we can write $n=4n'$ with $n'\not\equiv0\pmod4$. By Lemma \ref{Lemma x+3y=2 square} there exist $x_1,y_1,z_1,w_1,m_1\in\N$ such that $n'=x_1^2+y_1^2+z_1^2+w_1^2$ and $x_1+3y_1=2m_1^2$. Clearly we can write
$n=4n'=x^2+y^2+z^2+w^2$ with $x+3y=(2m_1)^2$, where $x=2x_1,y=2y_1,z=2z_1,w=2w_1\in\N$.

{\bf Case 2.} $n\equiv1,2,3\pmod4$.

We first note that $$4\times10^9>\bigg{[}\(\frac{8}{10^{1/4}-9^{1/4}}\)^4\bigg{]}>\bigg{[}\(\frac{4}{10^{1/4}-9^{1/4}}\)^4\bigg{]}.$$ If $n\ge4\times10^9$, then we have $(10n)^{1/4}-(9n)^{1/4}>8$. Hence we can find an integer $(9n)^{1/4}\le m\le (10n)^{1/4}$ satisfying the following condition:
 $$\begin{cases}2\nmid m\ \text{and}\ 5\nmid m&\mbox{if}\ n\equiv1,2\pmod4,\\4\mid m\ \text{and}\ 5\nmid m&\mbox{if}\ n\equiv3\pmod4.\end{cases}$$
By Lemma \ref{Lemma n=1,2 mod4 representation of 1,10,10} there exist $u\in\Z$ and $z,w\in\N$ such that
$$10n-m^4=u^2+10z^2+10w^2.$$
Clearly $-\sqrt{10n-m^4}\le u\le\sqrt{10n-m^4}$. Replacing $u$ by $-u$ if necessary, we may assume that $u\in\Z$ and $u\equiv -3m^2\pmod{10}$. By the inequality
\begin{equation*}
u+3m^2\ge-\sqrt{10n-m^4}+3m^2=\frac{10(m^4-n)}{\sqrt{10n-m^4}+3m^2}>0
\end{equation*}
we can write $s+3m^2=10y$ for some $y\in\N$. This gives
$$10n-m^4=(10y-3m^2)^2+10z^2+10w^2,$$
and hence
$$n=(m^2-3y)^2+y^2+z^2+w^2.$$
Let $x:=m^2-3y$. Then we have
$$10x=m^2-3u\ge m^2-3\sqrt{10n-m^4}=\frac{10(m^4-9n)}{m^2+3\sqrt{10n-m^4}}\ge0.$$
This gives $x\ge0$ and hence we have
$n=x^2+y^2+z^2+w^2$ $(x,y,z,w\in\N)$ with $x+3y=m^2$.

In view of the above, we complete the proof.\qed

{\bf Proof of Theorem \ref{Theorem power of 4}.} When $n=1,2\cdots,10$, we can verify our result by computer. Now assume $n>10$. By Lemma \ref{Lemma n=1,2 mod4 representation of 1,10,10} (i)--(ii) for any positive integer $n$ with $4\nmid n$, there are integers $a,z,w$ such that
$$10n-\varepsilon^2=a^2+10z^2+10w^2,$$
where
$$\ve=\begin{cases}1&\mbox{if}\ n\equiv1,2\pmod4,\\4&\mbox{if}\ n\equiv
3\pmod4.\end{cases}$$
Without loss of generality, we assume $a\equiv-3\ve\pmod{10}$ (otherwise we can replace $a$ by $-a$). Writing $a=10y-3\ve$, we obtain
$$10n-\ve^2=(10y-3\ve)^2+10z^2+10w^2,$$
and hence we have
$$n=(\ve-3y)^2+y^2+z^2+w^2.$$
Letting $x:=\ve-3y$, we have
\begin{equation}\label{Eq. A in the proof of Theorem power of 4}
n=x^2+y^2+z^2+w^2\ \text{and}\ x+3y=\ve.
\end{equation}
In addition, according to Lemma \ref{Lemma n=1,2 mod4 representation of 1,10,10} (iii), for any positive odd integer $n$, there are $b,z',w'\in\Z$ such that
$$10n-2^2=b^2+10z'^2+10w'^2.$$
As above, we may write $b=10y'-6$ for some $y'\in\Z$. This gives
$$10n-2^2=(10y'-6)^2+10z'^2+10w'^2,$$
and hence we have
$$n=(2-3y')^2+y'^2+z'^2+w'^2.$$
Letting $x'=2-3y'$, we obtain that
\begin{equation}\label{Eq. B in the proof of Theorem power of 4}
n=x'^2+y'^2+z'^2+w'^2\ \text{and}\ x'+3y'=2.
\end{equation}
Now let $n=2n'\equiv2\pmod4$ be a positive integer. By (\ref{Eq. exceptional set of 1,5,5}) we see that there are integers $a_1,z_1',w_1'$ such that
$$5n'-\eta^2=a_1^2+5z_1'^2+5w_1'^2,$$
where
$$\eta=\begin{cases}4&\mbox{if}\ n'\equiv 1\pmod4,\\1&\mbox{if}\ n'\equiv
3\pmod4.\end{cases}$$
Without loss of generality, we assume $a_1\equiv-2\eta\pmod5$ and then write $a_1=5y_1'-2\eta$. This gives
$$5n'-\eta^2=(5y_1'-2\eta)^2+5z_1'^2+5w_1'^2,$$
and hence
$$n'=(\eta-2y_1')^2+y_1'^2+z'^2+w'^2.$$
Letting $x_1':=\eta-2y_1'$, we obtain that $n'=x_1'^2+y_1'^2+z_1'^2+w_1'^2$ with $x_1'+2y_1'=\eta$.
As $$n=2n'=(y_1'-x_1')^2+(y_1'+x_1')^2+(z_1'-w_1')^2+(z_1'+w_1')^2,$$
letting $x_1:=y_1'-x_1', y_1:=y_1'+x_1', z_1=z_1'-w_1'$ and $w_1:=z_1'+w_1'$, we obtain that
\begin{equation}\label{Eq. C in the proof of Theorem power of 4}
n=x_1^2+y_1^2+z_1^2+w_1^2\ \text{and}\ x_1+3y_1=2\eta.
\end{equation}
By (\ref{Eq. B in the proof of Theorem power of 4}) and (\ref{Eq. C in the proof of Theorem power of 4}), we obtain that
for every positive integer $n_0$ with $4\nmid n$ there are integers $x_0,y_0,z_0,w_0$ such that
\begin{equation}\label{Eq. D in the proof of Theorem power of 4}
n_0=x_0^2+y_0^2+z_0^2+w_0^2\ \text{and}\ x_0+3y_0\in\{2\times1, 2\times2^2\}.
\end{equation}

Now we prove our result by induction on $n$. If $16\mid n$, then the desired result follows from induction hypothesis.
If $4\nmid n$, then (\ref{Eq. A in the proof of Theorem power of 4}) implies the desired result. If $n=4n'$ for some $n'\not\equiv0\pmod4$, then by (\ref{Eq. D in the proof of Theorem power of 4}) one can easily verify our result.

In view of the above, we complete the proof. \qed

\subsection*{Acknowledgements}
We thank Prof. Zhi-Wei Sun for his helpful comments. We also thank the referee for helpful comments. This research was supported by the National Natural Science
Foundation of China (grant 11971222).
\normalsize

\end{document}